\newtheorem{thm}{Theorem}[section]
\newtheorem{lem}[thm]{Lemma}
\newtheorem{prop}[thm]{Proposition}
\theoremstyle{definition} 
\theoremstyle{remark} \newtheorem{rem}[thm]{Remark}
\newcommand{\tr}{\mathrm{tr}\,}
\newcommand{\bbr}{\mathbb{R}}
\newcommand{\bbs}{\mathbb{S}}
\newcommand{\abs}[1]{\lvert #1\rvert}
\begin{document}

\title[Uniqueness of self-similar solutions]
{Uniqueness of self-similar solutions to flows by quotient
curvatures}

\author{Li Chen}
\address{Faculty of Mathematics and Statistics, Hubei University, Wuhan
430062,  P.R. China. } \email{chenli@hubu.edu.cn}

\author{Shanze Gao}
\address{School of Mathematical Science, University of
Science and Technology of China, Hefei Anhui,
        230026 , P.R. China}
\email{shanze@ustc.edu.cn}

\thanks{
This research was supported in part by Hubei Key Laboratory of
Applied Mathematics (Hubei University).}

\date{}
\maketitle
\begin{abstract}
In this paper, we consider a family of closed hypersurfaces which shrink self-similarly with speed of quotient curvatures. We show
that the only such hypersurfaces are shrinking spheres.
\end{abstract}

\maketitle {\it \small{{\bf Keywords}: Uniqueness, convex solutions,
quotient of curvatures, self-similar.}

{{\bf MSC}: 35J15, 35J60, 53C44.} }

\section{Introduction}

Let $X: \mathcal{M} \rightarrow \mathbb{R}^{n+1}$ be a smooth closed hypersurface with $n\geq 2$, satisfying
\begin{equation}\label{eq:SS}
\mathcal{F}(\kappa)=\langle X, \nu\rangle,
\end{equation}
where $\kappa=(\kappa_{1},\kappa_{2},...,\kappa_{n})$ denotes the principal
curvatures of $\mathcal{M}$, $\mathcal{F}$ is a homogeneous symmetric
function of $\kappa$ and $\nu$ denotes the outward normal vector of $\mathcal{M}$. Such hypersurfaces are called the self-similar solutions to the following curvature flow
\begin{equation}\label{eq:CF}
\frac{\partial }{\partial t}X=-\mathcal{F}\nu
\end{equation}
(see \cite{Huisken1990,McCoy2011,Gao-Li-Ma2018} etc.).

Self-similar solutions play an important role in describing
asymptotic behaviors of curvature flows such as mean curvature flow
and Gauss curvature flow (see \cite{Huisken1990,Guan-Ni2017,Andrews-Guan-Ni2016}
etc.). Examples in \cite{Abresch-Langer1986,Angenent1989} show that
the solution is usually not unique. In 1990,
Huisken\cite{Huisken1990} proved that the closed self-similar
solution to mean curvature flow is a sphere under mean convexity
condition. In \cite{Andrews1994,Andrews2007}, Andrews studied
curvature flows \eqref{eq:CF} motioned by a class of
$1$-homogeneous functions of curvatures, including $\mathcal{F}=(\sigma_{k}/\sigma_{l})^{\frac{1}{k-l}}$ where $\sigma_{k}$ is the $k$-th elementary symmetric polynomial and $0\leq l<k\leq n$. Later,
McCoy\cite{McCoy2011} showed the uniqueness of self-similar
solutions to these flows by elliptic methods.

Recently, the uniqueness of strictly
convex self-similar solutions to $\alpha$-Gauss curvature flow is
proven by Choi-Daskalopoulos\cite{Choi-Daskalopoulos} and
Brendle-Choi-Daskalopoulos\cite{Brendle-Choi-Daskalopoulos}. In details, they showed if $\mathcal{M}$ is a strictly convex
hypersurface in $\mathbb{R}^{n+1}$ which satisfies the equation
\begin{equation*}
K^{\alpha}=\langle X, \nu\rangle,
\end{equation*}
then $\mathcal{M}$ is a sphere when $\alpha>\frac{1}{n+2}$, where
$K$ is the Gauss curvature. In \cite{Choi-Daskalopoulos,
Brendle-Choi-Daskalopoulos}, they introduced two important
functions which can be written as
\begin{equation}\label{eq:W}
W(x)=u \cdot \lambda_{\max}(b_{ij})
-\frac{n\alpha-1}{2n\alpha}(u^2+|Du|^2)
\end{equation}
and
\begin{equation}\label{eq:Old-Z}
\tilde{Z}(x)=u\cdot \tr(b_{ij})-\frac{n\alpha-1}{2\alpha}(u^2+|Du|^2),
\end{equation}
where $u$ is the support function of
$\mathcal{M}$, $b_{ij}=u_{ij}+u\delta_{ij}$ and $\lambda_{\max}(b_{ij})$ is the largest eigenvalue of $(b_{ij})$ (see details in Section \ref{sec:pre}).

Later, motivated by the idea of Choi-Daskaspoulos
\cite{Choi-Daskalopoulos} and Brendle-Choi-Daskaspoulos
\cite{Brendle-Choi-Daskalopoulos}, Li, Ma and the second author
\cite{Gao-Li-Ma2018} proved the uniqueness of strictly convex
self-similar solutions to a class of curvature flows \eqref{eq:CF},
which includes $\mathcal{F}=\sigma_{k}^{\alpha}$ for $\alpha>\frac{1}{k}$. But the following case
\begin{equation}\label{eq:curq}
\left(\frac{\sigma_{k}(\kappa)}{\sigma_{l}(\kappa)}\right)^{\alpha}=\langle
X,\nu \rangle
\end{equation}
is not included in their paper, where $1\leq l<k\leq n$ and $\alpha>\frac{1}{k-l}$. We remark that the uniqueness of \eqref{eq:curq} is proven in \cite{Andrews1994,Andrews2007,McCoy2011} when $\alpha=\frac{1}{k-l}$.

To overcome this difficulty, the first author \cite{ChenLi2019}
introduced
 a new $Z$ function which is defined as follows
\begin{equation}\label{eq:New-Z}
Z=uG(b_{ij})-\frac{n\beta}{2} (u^{2}+\abs{Du}^{2}),
\end{equation}
where $\beta$ is a constant to be chosen later, and
$$G=\frac{n}{k}\bigg(\sigma_{1}(b_{ij})-(k+1)\frac{\sigma_{k+1}(b_{ij})}
{\sigma_{k}(b_{ij})}\bigg).$$ Using this new $Z$ function
\eqref{eq:New-Z} together with the $W$ function \eqref{eq:W}, he proved
any closed strictly convex hypersurface in $\mathbb{R}^{n+1}$, satisfying
the equation
\begin{equation*}\label{Chen}
\left(\frac{\sigma_{n}(\kappa)}{\sigma_{n-k}(\kappa)}\right)^{\alpha}=\langle
X,\nu \rangle,
\end{equation*}
is a sphere when $\alpha>\frac{1}{k}$.

In this paper, using the new $Z$ function \eqref{eq:New-Z} and the $W$
function \eqref{eq:W}, we thoroughly prove uniqueness of solutions to the equation \eqref{eq:curq}.
\begin{thm}\label{th:curq}
Let $\mathcal{M}$ be a closed strictly
 convex hypersurface in $\bbr^{n+1}$, which satisfies
\begin{equation}\label{eq:main}
\left(\frac{\sigma_{k}(\kappa)}{\sigma_{l}(\kappa)}\right)^{\alpha}=\langle X,\nu \rangle
\end{equation}
where $0\leq l<k\leq n$, then $\mathcal{M}$ is a standard sphere
for $\alpha>\frac{1}{k-l}$.
\end{thm}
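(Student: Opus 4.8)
The plan is to pass to the support function on $\bbs^{n}$, turn \eqref{eq:main} into a fully nonlinear elliptic equation, and then run the maximum-principle scheme built on the functions $W$ of \eqref{eq:W} and the new $Z$ of \eqref{eq:New-Z}, following \cite{Brendle-Choi-Daskalopoulos,Gao-Li-Ma2018,ChenLi2019}.

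\textbf{Reduction.} Since $\mathcal{M}$ is closed and strictly convex, the inverse Gauss map parametrises it over $\bbs^{n}$; write $u=\langle X,\nu\rangle>0$ for the support function and $b_{ij}=\bar\nabla_i\bar\nabla_j u+u\,\bar g_{ij}$ for the (positive definite) matrix of principal radii, where $\bar\nabla$ and $\bar g$ are the connection and metric of the round sphere. Using $\sigma_m(\kappa)=\sigma_{n-m}(b)/\sigma_n(b)$ and $\abs{X}^{2}=u^{2}+\abs{\bar\nabla u}^{2}$, equation \eqref{eq:main} becomes
\begin{equation*}
\Phi(b):=\Bigl(\tfrac{\sigma_{n-k}(b)}{\sigma_{n-l}(b)}\Bigr)^{\alpha}=u\qquad\text{on }\bbs^{n}.
\end{equation*}
Here $\Phi$ is homogeneous of degree $-d$ with $d:=\alpha(k-l)>1$; its derivative $\Phi^{ij}=\partial\Phi/\partial b_{ij}$ is negative definite, so $-\Phi^{ij}\bar\nabla_i\bar\nabla_j$ is elliptic; $\Phi^{-1/d}=(\sigma_{n-l}(b)/\sigma_{n-k}(b))^{1/(k-l)}$ is concave on the positive cone (the classical concavity of quotients of elementary symmetric functions), which is the structural input used below; and $\Phi^{ij}b_{ij}=-d\,u$ by the Euler relation. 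Differentiating $\Phi(b)=u$ once and twice and commuting $\bar\nabla$'s on $\bbs^{n}$ (the sphere's curvature producing the lower-order corrections) gives the first- and second-order identities among the derivatives of $b$ and $u$ needed below.

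\textbf{Test functions.} With the constants dictated by the degree $d$, set
\begin{equation*}
W=u\,\lambda_{\max}(b)-\tfrac{d-1}{2nd}\bigl(u^{2}+\abs{\bar\nabla u}^{2}\bigr),\qquad Z=u\,G(b)-\tfrac{n\beta}{2}\bigl(u^{2}+\abs{\bar\nabla u}^{2}\bigr),
\end{equation*}
that is, \eqref{eq:W} with $n\alpha$ replaced by $d$, and \eqref{eq:New-Z} with $G$ the ``corrected trace'' attached to the quotient $\sigma_{n-k}(b)/\sigma_{n-l}(b)$, the analogue of $\tfrac{n}{k}(\sigma_1-(k+1)\sigma_{k+1}/\sigma_k)$: smooth, $1$-homogeneous, and equal to $\sigma_1$ with the same first differential as $\sigma_1$ at umbilic matrices; $\beta$ is a constant to be fixed. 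Two pointwise algebraic facts are used: $\sigma_1(b)\le G(b)\le n\,\lambda_{\max}(b)$, each with equality precisely when $b$ is a multiple of the identity (the left inequality is Maclaurin's; the right one rewrites $\sigma_1-(k+1)\sigma_{k+1}/\sigma_k$ as $k$ times a weighted mean of the eigenvalues of $b$). The constant $\beta$ is chosen so that the quadratic parts of $nW$ and $Z$ coincide.

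\textbf{Maximum principle and conclusion.} Let $\mathcal{L}\varphi=\Phi^{ij}(\bar\nabla_i\bar\nabla_j\varphi+\varphi\,\bar g_{ij})$ denote the linearisation of $b\mapsto\Phi(b)$. Using the differentiated soliton equation, the Euler relation and the concavity of $\Phi^{-1/d}$ (the latter to absorb the second-order term $\Phi^{ij,pq}\,\bar\nabla_m b_{ij}\,\bar\nabla_m b_{pq}$), I would derive the differential inequalities obeyed by $Z$ and $W$: the target is that $Z$ satisfies $\mathcal{L}Z\le 0$ up to a nonnegative error that vanishes only at umbilic points, so that the strong maximum principle on the closed manifold $\bbs^{n}$ forces $Z$ to be constant; and that $W$, treated in the viscosity sense near its maximum since $\lambda_{\max}(b)$ is only Lipschitz where the top eigenvalue is not simple, obeys the analogous inequality and is likewise constant. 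Once both are constant, $nW-Z=u\,(n\lambda_{\max}(b)-G(b))$ is a constant, which is $\ge 0$ because $u>0$ and $G\le n\lambda_{\max}$; the nonnegative, umbilicity-detecting error terms dropped above then force this constant to vanish and $n\lambda_{\max}(b)\equiv G(b)$, i.e.\ $b$ is umbilic at every point. Since $n\ge 2$ this makes the common principal radius constant, and feeding this back into $\Phi(b)=u$ identifies $\mathcal{M}$ as a round sphere centred at the origin of the radius prescribed by \eqref{eq:main}. The technical heart, and the main obstacle, is exactly this computation: controlling $\Phi^{ij}$ and $\Phi^{ij,pq}$ for the genuine quotient $(\sigma_{n-k}/\sigma_{n-l})^{\alpha}$ rather than the single power $\sigma_{n-k}^{-\alpha}$ treated in \cite{ChenLi2019}, choosing $G$ and $\beta$ so that all uncontrolled terms reassemble into the nonnegative errors, and keeping the viscosity treatment of $\lambda_{\max}$ consistent with that bookkeeping.
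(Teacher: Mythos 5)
Your reduction to the support-function equation and your choice of the test functions $W$ and $Z$ do follow the route of the paper, but the proposal stops exactly where the proof has to begin: everything you yourself label ``the technical heart, and the main obstacle'' is the actual content of the paper (its Proposition \ref{th:Z}, Lemma \ref{th:W} and Lemma \ref{th:Gineq}), and none of it is carried out. Concretely, after normalizing to $F=(\sigma_k/\sigma_l)^{1/(k-l)}$ so that the equation reads $F(b_{ij})=u^{p_0}$ with $p_0=\frac{p-1}{k-l}\in(-1,0)$, one must (a) compute $F^{ij}D_iD_jZ$ for $Z=uG-\frac{n\beta}{2}(u^2+|Du|^2)$; (b) prove, for the specific choice $G=\frac{n}{k}\bigl(\sigma_1-(k+1)\sigma_{k+1}/\sigma_k\bigr)$ and $\beta=1+p_0$, the two algebraic inequalities $(1+p_0)FG\ge n\beta F^{ij}b_{ik}b_{kj}$ and $\sum_iG^{ii}\le n$ --- this is precisely where the quotient structure enters, via the Gao--Ma inequality $\frac{1}{k(k-1)}\sigma_{1}-\frac{k\sigma_{k}}{(k-1)\sigma_{k-1}}+\frac{(k+1)\sigma_{k+1}}{k\sigma_{k}}\geq 0$ telescoped over $i=l+1,\dots,k$, together with Newton's inequality; and (c) run the eigenvalue-perturbation argument of Brendle--Choi--Daskalopoulos at a maximum point of $W$, using the concavity of $F$, to conclude that $(b_{ij})$ is scalar and $Du=0$ there. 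Describing $G$ only by a list of desired properties and leaving $\beta$ ``to be fixed'' means the decisive estimates are absent, so the proposal is a plan rather than a proof.

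Beyond the missing computations, the maximum-principle endgame you sketch is not the one the computation supports. You propose global differential inequalities forcing both $Z$ and $W$ to be constant on all of $\bbs^n$, and then compare $nW-Z$. But after all cancellations the inequality for $Z$ takes the form
\begin{equation*}
F^{ij}D_iD_jZ-\tfrac{2}{u}F^{ij}u_iD_jZ\;\ge\;\bigl\{p_0(p_0-1)FG^{ii}-n\beta p_0F-2GF^{ii}+2n(p_0+1)F^{ii}b_{ii}\bigr\}\tfrac{u_i^2}{u}-uG^{kl}F^{ij,pq}b_{ijk}b_{pql},
\end{equation*}
and the sign of the bracket is only established at umbilic points, where it equals $-(n-1)p_0(p_0+1)F>0$; there is no globally favorable sign, and nothing in your sketch supplies one. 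This is exactly why the paper first proves, by a separate pointwise argument at a maximum point $\bar x$ of $W$ (Lemma \ref{th:W}), that $(b_{ij})(\bar x)$ is scalar and $Du(\bar x)=0$, then obtains the differential inequality only in a small neighborhood $U$ of $\bar x$, applies the strong maximum principle there using $Z\le nW\le nW_{\max}=Z(\bar x)$, and finally propagates constancy of $W$ by connectedness before invoking Lemma \ref{th:W} at every point to get $Du\equiv0$. Your global scheme (in particular a global viscosity inequality for $W$ itself, which the paper never needs nor proves) is asserted rather than derived, so as written the argument has a genuine gap.
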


This paper is organized as follows. In Section \ref{sec:pre},
we give some notations, recall some basic properties of convex hypersurfaces and derive basic formulas. In section \ref{sec:W},
we consider $W$ at its maximum points for a general equation. In the last
section, we prove the main theorem.

\section{Preliminaries}
\label{sec:pre}

We first recall some basic properties of convex hypersurfaces.

Let $\mathcal{M}$ be a smooth, closed, uniformly convex hypersurface
in $\mathbb{R}^{n+1}$.
The support function $u: \mathbb{S}^n\rightarrow \mathbb{R}$ of
$\mathcal{M}$ is defined by
\begin{equation*}
u(x)=\sup\{\langle x, y\rangle: y \in \mathcal{M}\}.
\end{equation*}
In this case, the supremum is attained at a point $y$ if $x$ is the outer
normal of $\mathcal{M}$ at $y$. It is well-known that (see \cite{Andrews1994} for example)
\begin{equation*}
y=u(x)x+Du(x).
\end{equation*}
Hence
\begin{equation*}\label{eq:|y|}
|y|=\sqrt{u^2+|Du|^2}.
\end{equation*}
And the principal radii of curvature of $\mathcal{M}$, under a smooth local
orthonormal frame on $\mathbb{S}^n$, are the eigenvalues of the
matrix $(b_{ij})$ where $b_{ij}=u_{ij}+u\delta_{ij}$. Thus, we can rewrite the two important functions $W$ and $\tilde{Z}$ in \cite{Choi-Daskalopoulos,Brendle-Choi-Daskalopoulos} as \eqref{eq:W} and \eqref{eq:Old-Z}. From the relation between principal curvatures and principal radii, we know
\begin{equation*}
\sigma_{k}(\kappa)=\frac{\sigma_{n-k}(b_{ij})}{\sigma_{n}(b_{ij})}
\end{equation*}
and
we can rewrite the equation \eqref{eq:main} by the support function $u$
of $\mathcal{M}$. So Theorem \ref{th:curq} is equivalent  to the
following theorem.
\begin{thm}\label{th:curq-}
Any smooth strictly convex solution of the following equation
\begin{equation}\label{eq:cmq}
\frac{\sigma_{k}(b_{ij})}{\sigma_{l}(b_{ij})}=u^{p-1}\quad \text{ on }\bbs^{n}
\end{equation}
is a constant for $1>p>1-{k-l}$, where the strict convexity of a
solution $u$ means that the matrix $(b_{ij})$ positive definite on $\mathbb{S}^n$ and $0\leq l<k\leq n$.
\end{thm}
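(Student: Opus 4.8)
The plan is to prove Theorem~\ref{th:curq-} by a maximum principle argument applied to two auxiliary functions, following the strategy of Choi--Daskalopoulos, Brendle--Choi--Daskalopoulos and Chen. Throughout we work on $\mathbb{S}^n$ with the round metric and write $b_{ij}=u_{ij}+u\delta_{ij}$, which is positive definite by hypothesis. Rescaling, we may assume the solution satisfies $\sigma_k(b_{ij})/\sigma_l(b_{ij})=u^{p-1}$ with $p\in(1-(k-l),1)$. The goal is to show $u$ is constant, i.e. $\mathcal{M}$ is a sphere. The two functions in play are the $W$ function \eqref{eq:W}, namely $W=u\cdot\lambda_{\max}(b_{ij})-\tfrac{n\beta}{2}(u^2+|Du|^2)$ for a suitable constant $\beta$ (here $\beta$ plays the role of $\tfrac{n\alpha-1}{n\alpha}$ in the Gauss curvature case, chosen so that $\beta=\tfrac{1-p}{?}$ matches the homogeneity of the operator $F=\bigl(\sigma_k/\sigma_l\bigr)^{1/(k-l)}$), and the new $Z$ function \eqref{eq:New-Z}, $Z=uG(b_{ij})-\tfrac{n\beta}{2}(u^2+|Du|^2)$ with $G=\tfrac{n}{k}\bigl(\sigma_1(b_{ij})-(k+1)\tfrac{\sigma_{k+1}(b_{ij})}{\sigma_k(b_{ij})}\bigr)$.

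The core of the argument proceeds in the following steps. First, I would record the linearized equation: differentiating $\log\sigma_k(b)-\log\sigma_l(b)=(p-1)\log u$ twice and contracting against the natural second-order operator $L=F^{ij}\nabla_i\nabla_j$ (where $F=(\sigma_k/\sigma_l)^{1/(k-l)}$ and $F^{ij}=\partial F/\partial b_{ij}$) yields a concavity/Codazzi identity of the form $L b_{ij}\geq (\text{l.o.t.})$, using the fact that $F$ is concave along the relevant directions and the Codazzi-type symmetry on $\mathbb{S}^n$. Second, I would apply this operator to $W$. Since $\lambda_{\max}$ is only Lipschitz, one works at an interior maximum point $x_0$ of $W$, perturbs to make the top eigenvalue simple, and computes $LW$ at $x_0$; the homogeneity of $F$, Euler's relation $F^{ij}b_{ij}=F=u^{(p-1)/(k-l)}$, and the equation combine so that the zeroth- and first-order terms are controlled, and the concavity inequality forces, at $x_0$, either $\lambda_{\max}$ equals the other eigenvalues (umbilic) or a sign contradiction. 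This is exactly the mechanism of Section~\ref{sec:W}, which I would invoke: $W$ attains its maximum only where $(b_{ij})$ is a multiple of the identity, which pins down $\lambda_{\max}\le$ some explicit function of $u$. Third, and symmetrically, I would run the same computation for $Z$; the point of $G$ is that it is designed so that $uG(b)$ scales the same way and so that $LZ$, evaluated at a maximum of $Z$, produces the reverse inequality, giving a lower bound for (a curvature-type quantity built from) $\sigma_{k+1}/\sigma_k$ in terms of $u$.

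Fourth, I would combine the $W$-estimate and the $Z$-estimate. Pointwise one has algebraic inequalities among $\lambda_{\max}(b)$, $\sigma_1(b)$, $\sigma_k(b)/\sigma_l(b)$, and $G(b)$ — essentially Newton--Maclaurin inequalities — with equality exactly at umbilic points. The two maximum-principle bounds, read together, squeeze these quantities so that equality must hold everywhere, i.e. $b_{ij}=\phi(u)\delta_{ij}$ for some function $\phi$. Plugging the umbilic ansatz into \eqref{eq:cmq} gives an ODE for $u$ along great circles; the strict convexity and closedness (periodicity) then force $\phi$ constant, hence $u$ constant. I expect the range $1-(k-l)<p<1$ (equivalently $\alpha>\tfrac{1}{k-l}$) to enter precisely here and in the sign of $\beta$: it is the condition under which the constant $\beta$ can be chosen so that \emph{both} $LW\le 0$ at a max of $W$ \emph{and} $LZ\le 0$ at a max of $Z$ hold with the correct signs on the lower-order terms.

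The main obstacle, as in the predecessor papers, is Step~3 — the computation of $LZ$ at a maximum of $Z$. The function $G$ involves $\sigma_{k+1}/\sigma_k$, whose first and second derivatives in $b$ are algebraically heavy, and one must show that after using the equation, Euler's identity, and the first-order conditions $\nabla Z=0$ at the maximum, all the bad terms either cancel or acquire the right sign; this is where the specific coefficient $\tfrac{n}{k}$ and the factor $(k+1)$ in $G$ are forced. A secondary technical point is the non-smoothness of $\lambda_{\max}$ in $W$, handled by the standard Hamilton-type trick of working with $\sup$ over unit vectors and using that the maximum of a family of smooth functions satisfies the differential inequality in the viscosity/support sense. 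I would also need the strict convexity to remain in force throughout — it is given by hypothesis, so no continuity-method estimate is needed, unlike in the parabolic setting. Once these local computations are in place, the combination and the ODE endgame are routine.
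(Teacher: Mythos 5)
Your outline assembles the right ingredients (the $W$-function analysis, the new $Z$-function with $G=\tfrac{n}{k}(\sigma_1-(k+1)\sigma_{k+1}/\sigma_k)$, concavity of $(\sigma_k/\sigma_l)^{1/(k-l)}$, Newton--Maclaurin inequalities), but the step where the two functions are actually combined --- your Steps 3--4 --- is where the proof lives, and as you describe it it would not go through. Evaluating $LZ$ pointwise at a maximum of $Z$ does not ``produce a reverse inequality'' or a lower bound on $\sigma_{k+1}/\sigma_k$: at such a point $DZ=0$, and after Lemma \ref{th:Gineq} the surviving right-hand side of the $Z$-identity is a sum of a gradient-quadratic term and the concavity term, so the second-derivative test degenerates to $0\ge 0$ and yields neither a contradiction nor a pointwise curvature bound; consequently there is nothing to ``squeeze'' against the $W$-information, and umbilicity everywhere (your input for the ODE endgame) is never obtained. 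The mechanism the paper actually uses is different and essential: $n\lambda_{\max}\ge G$ gives $nW\ge Z$ with equality exactly at umbilic points; by Lemma \ref{th:W} a maximum point $\bar x$ of $W$ is umbilic with $Du(\bar x)=0$, hence $Z(\bar x)=nW_{\max}\ge Z$, i.e.\ $\bar x$ is simultaneously an interior maximum of $Z$; one then shows the differential inequality $F^{ij}D_iD_jZ-\tfrac{2}{u}F^{ij}u_iD_jZ\ge 0$ holds in a whole neighborhood of $\bar x$ (the coefficient of $u_i^2/u$ equals $-(n-1)p_0(p_0+1)F>0$ at the umbilic point and stays positive nearby, while the term $-uG^{kl}F^{ij,pq}b_{ijk}b_{pql}$ is controlled by concavity of $F$ near the umbilic point), and the \emph{strong} maximum principle forces $Z\equiv nW_{\max}$, hence $W\equiv W_{\max}$, locally; connectedness makes $W$ globally constant, and Lemma \ref{th:W} applied at every point gives $Du\equiv 0$, so $u$ is constant --- no umbilicity-everywhere step and no ODE along great circles is needed or available.

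A second substantive omission: the zeroth-order terms in Proposition \ref{th:Z} are discarded only because of the two algebraic inequalities of Lemma \ref{th:Gineq} with the specific choice $\beta=1+p_0$, $p_0=(p-1)/(k-l)\in(-1,0)$, namely $(1+p_0)FG\ge n\beta F^{ij}b_{ik}b_{kj}$ (equivalently $G\ge \tfrac{n}{k-l}\bigl((l+1)\tfrac{\sigma_{l+1}}{\sigma_l}-(k+1)\tfrac{\sigma_{k+1}}{\sigma_k}\bigr)$, proved by telescoping the Gao--Ma inequality) and $\sum_iG^{ii}\le n$ (Newton's inequality). You flag this computation as the ``main obstacle'' and where the coefficients of $G$ ``are forced,'' but you give no argument for it; since this is precisely the quantitative content that extends the known $\sigma_n/\sigma_{n-k}$ case to general quotients $\sigma_k/\sigma_l$, leaving it unproved leaves the theorem unproved. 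Your hesitation about the value of $\beta$ (``$\beta=\tfrac{1-p}{?}$'') reflects the same gap: the admissible range $1-(k-l)<p<1$ enters exactly through $\beta=1+p_0$ and the sign $-1<p_0<0$ in Lemma \ref{th:W} and in the positivity of $-(n-1)p_0(p_0+1)F$.
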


\begin{rem}
When $l=0$, equation \eqref{eq:cmq} is $k$-th $L_{p}$-
Christoffel-Minkowski problem with constant right hand side. In this
case, Theorem \ref{th:curq-} is proved by the first author in
\cite{ChenLi2019}.
\end{rem}

Throughout this paper, we do calculations in a unit orthogonal frame
and use summation convention unless otherwise stated. Let $D$ denotes
the covariant derivative with respect to the standard metric of the sphere $\mathbb{S}^n$ and $R_{ijkl}$ denote the Riemannian curvature tensor of $\mathbb{S}^n$. And
$u_{ij}=D_{i}D_{j}u$, $u_{ijk}=D_{k}u_{ij}$ and
$u_{ijkl}=D_{l}u_{ijk}$.
From Ricci identity, we know
\begin{align*}
u_{ijk}&=u_{ikj}+u_{m}R_{mijk}=u_{ikj}+u_{m}(\delta_{mj}\delta_{ik}-\delta_{mk}\delta_{ij})\\
&=u_{ikj}+u_{j}\delta_{ik}-u_{k}\delta_{ij}.
\end{align*}
This implies $b_{ijk}=b_{ikj}$. Furthermore,
\begin{align*}
b_{ijkl}&=b_{ijlk}+b_{mj}R_{mikl}+b_{im}R_{mjkl}\\
&=b_{ijlk}+b_{kj}\delta_{il}-b_{lj}\delta_{ik}+b_{ik}\delta_{jl}-b_{il}\delta_{jk}.
\end{align*}
This implies
\begin{equation}\label{eq:biijj}
b_{iijj}=b_{ijij}=b_{ijji}-b_{jj}+b_{ii}=b_{jjii}-b_{jj}+b_{ii}
\end{equation}
for any fixed $i,j$. Since

For the convenience of discussion, instead of \eqref{eq:cmq}, we consider
\begin{equation}\label{eq:F}
F(u_{ij}+u\delta_{ij})=u^{p_{0}}\quad \text{on }\bbs^{n}
\end{equation}
where $F$ is an $1$-homogeneous function, i.e. $F(tA)=tF(A)$. For
any $1$-homogeneous function $G=G(b_{ij})$, it is easy to check the
following equation by \eqref{eq:biijj},
\begin{equation}\label{eq:G}
\begin{aligned}
F^{ij}D_{i}D_{j}G&=G\sum_{i}F^{ii}- F\sum_{i}G^{ii}+G^{ij}D_{i}D_{j}F\\
&\quad-G^{kl}F^{ij,pq}b_{ijk}b_{pql}+F^{ij}G^{kl,pq}b_{kli}b_{pqj},
\end{aligned}
\end{equation}
where $F^{ij}=\frac{\partial F}{\partial b_{ij}}$ and $F^{ij,pq}=\frac{\partial^{2} F}{\partial b_{ij}\partial b_{pq}}$.

By direct calculations, we have
\begin{align}
&F^{ij}D_{i}D_{j}u=F-u\sum_{i}F^{ii}.\label{eq:u}\\ &F^{ij}D_{i}D_{j}(u^{2}+\abs{Du}^{2})=2F^{ij}b_{ik}b_{kj}-2uF+2u_{i}D_{i}F.\label{eq:u2Du2}
\end{align}

\begin{prop}\label{th:Z}
Suppose that $u$ is a solution to \eqref{eq:F} and $G=G(b_{ij})$ is
any $1$-homogeneous function. Then, for
$$Z=uG-\frac{n\beta}{2} (u^{2}+\abs{Du}^{2}),$$ we have the following
formula:
\begin{align*}
F^{ij}D_{i}D_{j}Z&=(1+p_{0}) FG-n\beta F^{ij}b_{ik}b_{kj}
+(n\beta-(1+p_{0})\sum_{i}G^{ii}) uF\\
&\quad +p_{0}(p_{0}-1)u^{p_{0}-1}G^{ij}u_{i}u_{j}-n\beta p_{0}
u^{p_{0}-1}\abs{Du}^{2}+2F^{ij}u_{i}D_{j}G\\
&\quad-uG^{kl}F^{ij,pq}b_{ijk}b_{pql}+uF^{ij}G^{kl,pq}b_{kli}b_{pqj}.
\end{align*}
\end{prop}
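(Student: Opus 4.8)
The plan is to compute $F^{ij}D_iD_jZ$ directly from the definition $Z=uG-\frac{n\beta}{2}(u^2+|Du|^2)$ by Leibniz's rule, applied to $F^{ij}D_iD_j$ as a second-order operator. Writing
\[
F^{ij}D_iD_j(uG)=G\,F^{ij}D_iD_ju+u\,F^{ij}D_iD_jG+2F^{ij}(D_iu)(D_jG),
\]
I would substitute \eqref{eq:u} for $F^{ij}D_iD_ju$ and \eqref{eq:G} for $F^{ij}D_iD_jG$. This is the only genuinely structural step: \eqref{eq:G} is the key identity that turns the hard term $u\,F^{ij}D_iD_jG$ into the curvature-like terms $G\sum_iF^{ii}-F\sum_iG^{ii}+G^{ij}D_iD_jF$ plus the two third-derivative quadratic forms $-G^{kl}F^{ij,pq}b_{ijk}b_{pql}+F^{ij}G^{kl,pq}b_{kli}b_{pqj}$, which (after multiplication by $u$) appear verbatim in the last line of the Proposition.

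Next I would handle the remaining pieces. Using the equation \eqref{eq:F}, $F=u^{p_0}$, so $F^{ij}D_iD_j(uG)$ contains $uG\sum_iF^{ii}-u^{p_0+1}\sum_iG^{ii}+u\,G^{ij}D_iD_jF$ after inserting \eqref{eq:u}; and the term $G\cdot(F-u\sum_iF^{ii})=FG-uG\sum_iF^{ii}$ cancels the $uG\sum_iF^{ii}$ contribution, leaving $FG$. For $u\,G^{ij}D_iD_jF$ I would differentiate $F=u^{p_0}$ twice: $D_iD_j(u^{p_0})=p_0u^{p_0-1}u_{ij}+p_0(p_0-1)u^{p_0-2}u_iu_j$, and since $u_{ij}=b_{ij}-u\delta_{ij}$, the $1$-homogeneity $G^{ij}b_{ij}=G$ gives $G^{ij}D_iD_jF=p_0u^{p_0-1}(G-u\sum_iG^{ii})+p_0(p_0-1)u^{p_0-2}G^{ij}u_iu_j$; multiplying by $u$ produces the $p_0FG$ part of the coefficient $(1+p_0)FG$, the $-(1+p_0)u\sum_iG^{ii}\cdot uF$-type contribution to the $uF$ coefficient (combined with the $n\beta$ coming from the $(u^2+|Du|^2)$ term), and the term $p_0(p_0-1)u^{p_0-1}G^{ij}u_iu_j$. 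Finally, for the $-\frac{n\beta}{2}(u^2+|Du|^2)$ piece I apply \eqref{eq:u2Du2}, which gives $-n\beta F^{ij}b_{ik}b_{kj}+n\beta uF-n\beta u_iD_iF$; and $u_iD_iF=u_iD_i(u^{p_0})=p_0u^{p_0-1}|Du|^2$, yielding the $-n\beta p_0u^{p_0-1}|Du|^2$ term. Collecting the $uF$ coefficients gives exactly $n\beta-(1+p_0)\sum_iG^{ii}$.

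The main obstacle is purely bookkeeping: making sure every term lands with the right sign and is grouped into the stated form, particularly (i) that the $\sum_iF^{ii}$ terms cancel completely, leaving no derivative of $F$ of that type, (ii) that the cross term $2F^{ij}u_iD_jG$ survives untouched, and (iii) that the various $u^{p_0}$, $u^{p_0+1}$, $u^{p_0-1}$ and $FG$ monomials are consistently rewritten using $F=u^{p_0}$ so that the coefficient of $FG$ is $(1+p_0)$ and the coefficient of $uF$ is $n\beta-(1+p_0)\sum_iG^{ii}$. There is no analytic difficulty here — the identities \eqref{eq:u}, \eqref{eq:u2Du2}, \eqref{eq:G} do all the work — so the proof is a careful substitution and collection of terms, and I would present it by first recording the three building-block expansions and then summing them.
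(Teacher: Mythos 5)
Your proposal is correct and is essentially the paper's own proof: expand $F^{ij}D_iD_jZ$ by the Leibniz rule, substitute \eqref{eq:u}, \eqref{eq:G} and \eqref{eq:u2Du2}, then use $D_iF=p_0u^{p_0-1}u_i$ and $D_iD_jF=p_0u^{p_0-1}(b_{ij}-u\delta_{ij})+p_0(p_0-1)u^{p_0-2}u_iu_j$ together with $F=u^{p_0}$ and $1$-homogeneity of $G$ to collect the stated coefficients. The bookkeeping you outline (cancellation of the $\sum_iF^{ii}$ terms, the surviving cross term $2F^{ij}u_iD_jG$, and the grouping into $(1+p_0)FG$ and $(n\beta-(1+p_0)\sum_iG^{ii})uF$) matches the paper's computation.
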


\begin{proof}
From \eqref{eq:G}, \eqref{eq:u} and \eqref{eq:u2Du2}, we have
\begin{align*}
F^{ij}D_{i}D_{j}Z&=(F-u\sum_{i}F^{ii})G+uG\sum_{i}F^{ii}-uF\sum_{i}G^{ii}+uG^{ij}D_{i}D_{j}F\\
&\quad-uG^{kl}F^{ij,pq}b_{ijk}b_{pql}+uF^{ij}G^{kl,pq}b_{kli}b_{pqj}\\
&\quad -n\beta(F^{ij}b_{ik}b_{kj}-uF+u_{i}D_{i}F)+2F^{ij}u_{i}D_{j}G.
\end{align*}

Using \eqref{eq:F}, we know
\begin{align*}
D_{i}F&=p_{0}u^{p_{0-1}}u_{i},\\
D_{i}D_{j}F&=p_{0}u_{0}^{p_{0}-1}(b_{ij}-u\delta_{ij})+p_{0}(p_{0-1})u^{p_{0}-2}u_{i}u_{j}.
\end{align*}

We finish the proof by combining above equations together.

\end{proof}

\section{Analysis at a maximum point of $W$}
\label{sec:W}

To study
$$W=u\lambda_{\max}(b_{ij})-\frac{\beta}{2}(u^{2}+\abs{Du}^{2}),$$
we need the following lemma which is a
slight modification of Lemma 5 in \cite{Brendle-Choi-Daskalopoulos}.
\begin{lem}\label{th:phi}
We choose a unit orthogonal frame such that $(b_{ij})=\mathrm{diag}(b_{11},b_{22},...,b_{nn})$ at a point $\bar{x}\in \bbs^{n}$. Let $\mu$ denote the multiplicity of $b_{11}$ at $\bar{x}$, i.e., $b_{11}(\bar{x})=\cdots=b_{\mu\mu}(\bar{x})>b_{\mu+1,\mu+1}(\bar{x})\geq \cdots \geq b_{nn}(\bar{x})$. Assume that $\varphi$ is a smooth function such that $\varphi\geq \lambda_{\max}$ and $\varphi(\bar{x})=\lambda_{\max}(\bar{x})=b_{11}(\bar{x})$. Then, at $\bar{x}$, we have
\begin{enumerate}[i)]
\item $b_{kli}=D_{i}\varphi\delta_{kl}$ for $1\leq k,l\leq \mu$;
\item $D_{i}D_{i}\varphi\geq b_{11ii}+2\sum_{l>\mu}(b_{11}-b_{ll})^{-1}b_{1li}^{2}$.
\end{enumerate}
\end{lem}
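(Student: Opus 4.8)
The plan is to approximate $\lambda_{\max}$, which is only Lipschitz when $\mu>1$, by the smooth Rayleigh quotients of $(b_{ij})$ in directions lying in — or close to — the top eigenspace $E=\mathrm{span}\{e_{1},\dots,e_{\mu}\}$, and to exploit that $\bar{x}$ is a minimum point of $\varphi$ minus each such quotient. First I would fix a smooth local orthonormal frame near $\bar{x}$, so that the $b_{ij}(x)$ are smooth functions with $(b_{ij}(\bar{x}))$ diagonal. For a fixed unit vector $\xi\in E$ the function $g_{\xi}(x):=\varphi(x)-\xi_{k}\xi_{l}b_{kl}(x)$ is smooth, satisfies $g_{\xi}\ge\varphi-\lambda_{\max}\ge0$, and vanishes at $\bar{x}$; hence $Dg_{\xi}(\bar{x})=0$, which reads $D_{i}\varphi(\bar{x})=\sum_{k,l\le\mu}\xi_{k}\xi_{l}\,b_{kli}(\bar{x})$ for every $i$ and every unit $\xi\in E$. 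Since this quadratic form in $\xi$ is constant on the unit sphere of $E$, the symmetric matrix $(b_{kli})_{k,l\le\mu}$ is forced to be a multiple of the identity, which is precisely assertion i).

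For assertion ii) I would fix $i$, let $\gamma(t)$ be the geodesic on $\bbs^{n}$ with $\gamma(0)=\bar{x}$, $\dot\gamma(0)=e_{i}$, and extend the chosen frame by parallel transport along $\gamma$, so that ordinary $t$-derivatives of components along $\gamma$ reproduce the covariant derivatives $b_{jki}$, $b_{jkii}$. For a vector $v=\sum_{l>\mu}c_{l}e_{l}$, with the $c_{l}$ to be chosen, set $\xi(t)=(e_{1}+tv)/\abs{e_{1}+tv}$ and compare $\varphi(\gamma(t))$ with the Rayleigh quotient $R(t):=\langle B(\gamma(t))\xi(t),\xi(t)\rangle$, where $B=(b_{jk})$. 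Since $\abs{\xi(t)}=1$ we have $R(t)\le\lambda_{\max}(B(\gamma(t)))\le\varphi(\gamma(t))$ with equality at $t=0$, and both sides are smooth in $t$; hence $t=0$ is a minimum of $\varphi\circ\gamma-R$, so $\dot R(0)=\tfrac{d}{dt}\varphi(\gamma(t))|_{0}$ and $\tfrac{d^{2}}{dt^{2}}\varphi(\gamma(t))|_{0}\ge\ddot R(0)$, the left side being $D_{i}D_{i}\varphi(\bar{x})$ because $\gamma$ is a geodesic. A second-order Taylor expansion of $R$, in which i) kills the coefficients $b_{1li}$ with $l\le\mu$ and $b_{1l}(\bar{x})=0$ for $l\ne1$ kills the $O(t)$ cross terms, gives
\begin{equation*}
\ddot R(0)=b_{11ii}+4\sum_{l>\mu}c_{l}\,b_{1li}+2\sum_{l>\mu}c_{l}^{2}\,(b_{ll}-b_{11}).
\end{equation*}

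Finally, since $b_{ll}-b_{11}<0$ for $l>\mu$, the right-hand side is concave in each $c_{l}$ and is maximized at $c_{l}=b_{1li}/(b_{11}-b_{ll})$, with maximal value $b_{11ii}+2\sum_{l>\mu}(b_{11}-b_{ll})^{-1}b_{1li}^{2}$; combining this with $D_{i}D_{i}\varphi(\bar{x})\ge\ddot R(0)$ for that optimal choice of $v$ yields exactly ii). I expect the only real care to lie in the bookkeeping along $\gamma$ — using a parallel frame so that $t$-derivatives become covariant derivatives, and checking that the first-order data genuinely agree before passing to second order — while the Taylor expansion itself and the one-variable optimization are routine.
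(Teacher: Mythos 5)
Your proof is correct: the paper itself offers no argument for this lemma, deferring entirely to Lemma 5 of Brendle--Choi--Daskalopoulos, and your Rayleigh-quotient argument (first-order vanishing of $\varphi-\langle B\xi,\xi\rangle$ for unit $\xi$ in the top eigenspace, then a second-order expansion along a geodesic with the perturbed direction $(e_{1}+tv)/\abs{e_{1}+tv}$ and optimization over $v$) is essentially that standard second-order eigenvalue-perturbation proof. The only cosmetic slip is the remark that i) ``kills the coefficients $b_{1li}$ with $l\leq\mu$'': those terms never enter your expansion because $v$ was taken orthogonal to the top eigenspace, so nothing is lost.
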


\begin{proof}
See the proof of Lemma 5 in \cite{Brendle-Choi-Daskalopoulos}.
\end{proof}

Now we use maximum principle at a maximum point of $W$ as in \cite{Brendle-Choi-Daskalopoulos,Gao-Li-Ma2018,ChenLi2019}. The concavity of $\left(\frac{\sigma_{k}}{\sigma_{l}}\right)^{\frac{1}{k-l}}$ is important in this step. We write down the details in the following form.

\begin{lem}\label{th:W}
Assume that $u$ is a positive solution to
\begin{equation*}
F(b_{ij})=u^{p_{0}}\quad \text{on }\bbs^{n}
\end{equation*}
such that the matrix $(b_{ij})$ is positive definite
on $\bbs^{n}$, where $F$ is a $1$-homogeneous and concave function
respect to $(b_{ij})$ and  the matrix $(\frac{\partial F}{\partial
b_{ij}})$ is positive definite on $\bbs^{n}$. For $-1<p_{0}<0$, if
$\bar{x}\in\bbs^{n}$ is a maximum point of $W$, then
$(b_{ij})(\bar{x})$ is a scalar matrix and $Du(\bar{x})=0$.
\end{lem}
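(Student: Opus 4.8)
The plan is to run the maximum principle for $W$ at the point $\bar x$, using Lemma~\ref{th:phi} to handle the non-smoothness of $\lambda_{\max}$, and to extract from the resulting differential inequality the conclusion that all the $b_{ii}$ collapse to a single value and $Du$ vanishes. First I would choose a smooth function $\varphi$ with $\varphi \ge \lambda_{\max}$ and $\varphi(\bar x)=\lambda_{\max}(\bar x)=b_{11}(\bar x)$ (the standard perturbation of $\lambda_{\max}$ as in Brendle--Choi--Daskalopoulos), so that $\widetilde W:=u\varphi-\tfrac{\beta}{2}(u^2+|Du|^2)$ satisfies $\widetilde W\ge W$ with equality at $\bar x$; hence $\bar x$ is also a maximum of $\widetilde W$. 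At $\bar x$ I would work in the orthonormal frame diagonalizing $(b_{ij})$, and record the first-order condition $D_i\widetilde W(\bar x)=0$, which (using $b_{ij}=u_{ij}+u\delta_{ij}$ and $D_i\varphi=b_{11ii}$-type relations from Lemma~\ref{th:phi} i)) ties $D_i\varphi$ to $u_i$ and the $b_{ii}$, and the second-order condition $F^{ij}D_iD_j\widetilde W(\bar x)\le 0$.

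Next I would feed $G=\varphi$ into the formula of Proposition~\ref{th:Z} — or rather, apply the analogue of that computation to $\widetilde W$ directly, which only requires $\sum_i \varphi^{ii}=1$ (since $\varphi$ agrees with $\lambda_{\max}$, a $1$-homogeneous function with this trace property at a point of multiplicity, when differentiated appropriately) and the first/second derivative identities for $F$ from the proof of Proposition~\ref{th:Z}. The key structural inputs are: the concavity of $F$, so that the term $-u\,\varphi^{kl}F^{ij,pq}b_{ijk}b_{pql}$ has a favorable sign (here $\varphi^{kl}$ is, by Lemma~\ref{th:phi} i), essentially the projection onto the top eigenspace, so concavity of $F$ on that block gives $F^{ij,pq}b_{ijk}b_{pql}\le 0$ contracted appropriately); and the gradient term together with the Lemma~\ref{th:phi} ii) inequality $D_iD_i\varphi \ge b_{11ii}+2\sum_{l>\mu}(b_{11}-b_{ll})^{-1}b_{1li}^2$, which produces a nonnegative "good" term $2\sum_i\sum_{l>\mu} u F^{ii}(b_{11}-b_{ll})^{-1}b_{1li}^2$ on the correct side. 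Combining these with the second-order condition, and using $-1<p_0<0$ so that the zeroth-order terms coming from $p_0(p_0-1)$, $-n\beta p_0$ and $(n\beta-(1+p_0))uF$ have controllable signs once $\beta$ is chosen correctly (this is where the precise range of $p_0$, equivalently $\alpha$, enters), I expect to arrive at an inequality of the form $0 \ge (\text{sum of manifestly nonnegative terms})$.

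From such an inequality each nonnegative summand must vanish. The vanishing of the gradient-type term forces $b_{1li}=0$ for all $i$ and all $l>\mu$; the vanishing of the remaining terms, combined with the first-order condition, should force $Du(\bar x)=0$, and then — plugging $Du(\bar x)=0$ back — force $b_{11}=b_{22}=\cdots=b_{nn}$ at $\bar x$, i.e. $(b_{ij})(\bar x)$ is a scalar matrix. I anticipate the main obstacle to be the bookkeeping in the second-order step: correctly matching the term $uF^{ij}\varphi^{kl,pq}b_{kli}b_{pqj}$ (the second-derivative-of-$G=\varphi$ term, which is genuinely present and must be controlled via Lemma~\ref{th:phi}, not discarded) against the good gradient term from Lemma~\ref{th:phi} ii), and choosing $\beta$ (likely $\beta$ related to $\tfrac{p_0}{n}$ or to $\tfrac{n\alpha-1}{n\alpha}$ as in \eqref{eq:W}) so that every leftover zeroth-order coefficient lands with the right sign precisely on the range $-1<p_0<0$. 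A secondary subtlety is justifying the use of the smooth barrier $\varphi$ and the trace identity $\sum_i\varphi^{ii}=1$ uniformly, i.e. checking that the multiplicity-$\mu$ structure in Lemma~\ref{th:phi} does not spoil the computation when $\mu>1$.
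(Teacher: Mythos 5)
Your overall strategy is the paper's strategy (maximum principle at $\bar x$ with the Brendle--Choi--Daskalopoulos barrier lemma, concavity of $F$, and a sign analysis using $-1<p_0<0$), but the setup as you state it does not work. From $\varphi\ge\lambda_{\max}$ with equality at $\bar x$ you get $\widetilde W\ge W$ touching from \emph{above} at a maximum point of $W$; this does not make $\bar x$ a maximum point of $\widetilde W$, so the conditions $D_i\widetilde W(\bar x)=0$ and $F^{ij}D_iD_j\widetilde W(\bar x)\le 0$ that your plan relies on are simply not available. The paper's device is different: it \emph{defines} $\varphi$ by $u\varphi-\tfrac{1+p_0}{2}(u^2+\abs{Du}^2)=W_{\max}$, i.e.\ $\varphi=u^{-1}\bigl(W_{\max}+\tfrac{1+p_0}{2}(u^2+\abs{Du}^2)\bigr)$. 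Then $W\le W_{\max}$ forces $\varphi\ge\lambda_{\max}$ with equality at $\bar x$ (so Lemma~\ref{th:phi} applies), while $u\varphi-\tfrac{1+p_0}{2}(u^2+\abs{Du}^2)$ is \emph{constant}, so its first and second derivatives vanish identically and no ``$\bar x$ maximizes $\widetilde W$'' claim is needed; this also pins down $\beta=1+p_0$. Moreover, there is no separate term $uF^{ij}\varphi^{kl,pq}b_{kli}b_{pqj}$ to ``match'': $\varphi$ is a function on $\bbs^{n}$, not of $(b_{ij})$, Proposition~\ref{th:Z} is not used in this lemma (it enters only in Section~\ref{sec:mainpf} with the smooth $G$), and all second-order information about $\lambda_{\max}$ is already encoded in the single inequality of Lemma~\ref{th:phi}~ii).

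The second gap is that the decisive algebra is left at the level of ``I expect a sum of nonnegative terms,'' and that is exactly where the work lies. One needs: the first-order identity $0=D_iW=\bigl(b_{11}-(1+p_0)b_{ii}\bigr)u_i+ub_{11i}$ (with $b_{11i}=0$ for $1<i\le\mu$) to eliminate $b_{11i}$ in favor of $u_i$; the splitting $F^{ij,pq}b_{ij1}b_{pq1}=F^{ii,jj}b_{ii1}b_{jj1}+2\sum_{i>j}\frac{F^{ii}-F^{jj}}{b_{ii}-b_{jj}}b_{ij1}^2$, whose off-diagonal part must be recombined with the good term $2uF^{ii}\sum_{l>\mu}(b_{11}-b_{ll})^{-1}b_{1li}^2$ from Lemma~\ref{th:phi}~ii) to yield the manifestly nonnegative expression $2\sum_{i>j>\mu}\frac{F^{ii}(b_{11}-b_{ii})^{2}-F^{jj}(b_{11}-b_{jj})^{2}}{(b_{jj}-b_{ii})(b_{11}-b_{ii})(b_{11}-b_{jj})}b_{ij1}^{2}$; and the final sign check, which uses $F^{11}b_{11}<F=\sum_iF^{ii}b_{ii}$ together with $-1<p_0<0$ to conclude $Du(\bar x)=0$. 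Finally, umbilicity is not obtained by ``plugging $Du(\bar x)=0$ back'': it comes from the equality case of $(1+p_0)\sum_iF^{ii}b_{ii}(b_{11}-b_{ii})\ge 0$, a term present in the very first inequality that must vanish once the gradient terms are shown nonnegative. So the proposal names the right ingredients, but both the barrier construction and the core computation need to be corrected/completed along these lines.
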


\begin{proof}

We define $\varphi$ by
$$u\varphi-\frac{p_{0}+1}{2}(u^{2}+\abs{Du}^{2})=W_{\max},$$ where
$W_{\max}$ is the maximum of $W$ on $\bbs^{n}$. This implies that
$\varphi$ satisfies the assumption in Lemma \ref{th:phi}. Using
Lemma \ref{th:phi}, we have
\begin{align*}
0&\geq (1+p_{0})\sum_{i}F^{ii}b_{ii}(b_{11}-b_{ii})+p_{0}(p_{0}-1)u^{p_{0}-1}u_{1}^{2}-uF^{ij,pq}b_{ij1}b_{pq1}\\
&\quad - (1+p_{0})p_{0}u^{p_{0}-1}\abs{Du}^{2}+2F^{ij}u_{i}b_{11j}+2uF^{ii}\sum_{l>\mu}(b_{11}-b_{ll})^{-1}b_{1li}^{2}.
\end{align*}

From $0=D_{i}W=\left(b_{11}-(1+p_{0})b_{ii}\right)u_{i}+ub_{11i}$, we know
\begin{align*}
F^{ij}u_{i}b_{11j}&=-u^{-1}\sum_{i>\mu}F^{ii}\left(b_{11}
-(1+p_{0})b_{ii}\right)u_{i}^{2}+p_{0}u^{-1}F^{11}b_{11}u_{1}^{2}
\end{align*}
in view of $b_{11i}=0$ for $1<i\leq\mu$ by Lemma \ref{th:phi}.

Using $$F^{ij,pq}b_{ij1}b_{pq1}=F^{ii,jj}b_{ii1}b_{jj1}+2\sum_{i> j}\frac{F^{ii}-F^{jj}}{b_{ii}-b_{jj}}b_{ij1}^{2}$$ and $$b_{kli}=0 \text{ for } 1\leq k,l\leq\mu \text{ and } k\neq l,$$
we have
\begin{align*}
&-F^{ij,pq}b_{ij1}b_{pq1}+2F^{ii}\sum_{l>\mu}(b_{11}-b_{ll})^{-1}b_{1li}^{2}\\
&\qquad =-F^{ii,jj}b_{ii1}b_{jj1}-2\sum_{i> \mu}\frac{F^{ii}-F^{11}}{b_{ii}-b_{11}}b_{i11}^{2}-2\sum_{i> j>\mu}\frac{F^{ii}-F^{jj}}{b_{ii}-b_{jj}}b_{ij1}^{2}\\
&\hspace{3em}+2F^{11}\sum_{l>\mu}(b_{11}-b_{ll})^{-1}b_{11l}^{2}+2F^{ll}\sum_{l>\mu}(b_{11}-b_{ll})^{-1}b_{1ll}^{2}\\
&\hspace{3em}+2F^{ii}\sum_{i>l>\mu}(b_{11}-b_{ll})^{-1}b_{1li}^{2}+2F^{ii}\sum_{l>i>\mu}(b_{11}-b_{ll})^{-1}b_{1li}^{2}\\
&\qquad =-F^{ii,jj}b_{ii1}b_{jj1}+2\sum_{i> \mu}F^{ii}(b_{11}-b_{ii})^{-1}b_{i11}^{2}+2\sum_{l>\mu}F^{ll}(b_{11}-b_{ll})^{-1}b_{1ll}^{2}\\
&\hspace{3em}+2\sum_{i> j>\mu}\frac{F^{ii}(b_{11}-b_{ii})^{2}-F^{jj}(b_{11}-b_{jj})^{2}}{(b_{jj}-b_{ii})(b_{11}-b_{ii})(b_{11}-b_{jj})}b_{ij1}^{2}.
\end{align*}

Since $F$ is concave, we know $-F^{ii,jj}b_{ii1}b_{jj1}\geq 0$.
Furthermore, combining it with
\begin{equation}\label{eq:umblic}
\sum_{i}F^{ii}b_{ii}(b_{11}-b_{ii})\geq 0
\end{equation}
and
\begin{align*}
\sum_{i> j>\mu}\frac{F^{ii}(b_{11}-b_{ii})^{2}-F^{jj}(b_{11}-b_{jj})^{2}}{(b_{jj}-b_{ii})(b_{11}-b_{ii})(b_{11}-b_{jj})}b_{ij1}^{2}\geq 0,
\end{align*}
we have
\begin{align*}
0&\geq p_{0}(p_{0}-1)u^{p_{0}-1}u_{1}^{2}-(1+p_{0})p_{0}u^{p_{0}-1}\abs{Du}^{2}\\
&\quad -2u^{-1}\sum_{i>\mu}F^{ii}\left(b_{11}-(1+p_{0})b_{ii}\right)u_{i}^{2}+2p_{0}u^{-1}F^{11}b_{11}u_{1}^{2}\\
&\quad +2u^{-1}\sum_{i> \mu}F^{ii}(b_{11}-b_{ii})^{-1}\left(b_{11}-(1+p_{0})b_{ii}\right)^{2}u_{i}^{2}\\
&=p_{0}\left(-2+2\frac{F^{11}b_{11}}{ F}\right)u^{p_{0}-1}u_{1}^{2}-(1+p_{0})p_{0}u^{p_{0}-1}\sum_{i> \mu}u_{i}^{2}\\
&\quad -\frac{2p_{0}}{u}\sum_{i> \mu}F^{ii}(b_{11}-b_{ii})^{-1}\left(b_{11}-(1+1^{-1}p_{0})b_{ii}\right)b_{ii}u_{i}^{2}.
\end{align*}

Since $-1<p_{0}<0$ and $F^{11}b_{11}<F$, the right hand-side of above inequality is non-negative which implies $Du=0$. And $b_{11}=b_{22}=\cdots=b_{nn}$ is from the equality of \eqref{eq:umblic}.
\end{proof}

\section{Proof of main theorem}
\label{sec:mainpf}

In this section, we choose
\begin{equation*}
G=\frac{n}{k}(\sigma_{1}-(k+1)\frac{\sigma_{k+1}}{\sigma_{k}})
\end{equation*}
for $F=\left(\frac{\sigma_{k}}{\sigma_{l}}\right)^{\frac{1}{k-l}}$.

It is easy to check that $n\lambda_{\max}\geq G$ which means $nW\geq Z$ and the equality occurs if and only if $(b_{ij})$ is a scalar matrix. And $G$ is convex since $\frac{\sigma_{k+1}}{\sigma_{k}}$ is concave. To estimate the right hand
side of the formula in Proposition \ref{th:Z}, we need the following lemma.

\begin{lem}\label{th:Gineq}
For $F=\left(\frac{\sigma_{k} }{\sigma_{l}}\right)^{\frac{1}{k-l}}$,
we choose
$G=\frac{n}{k}(\sigma_{1}-(k+1)\frac{\sigma_{k+1}}{\sigma_{k}})$ and
$\beta=1+p_{0}$. If $(b_{ij})$ is positive definite on $\bbs^{n}$,
then the following two inequalities hold:
\begin{enumerate}[i)]
\item $(1+p_{0}) FG-n\beta F^{ij}b_{ik}b_{kj}\geq 0$.
\item $-(1+p_{0})\sum_{i}G^{ii}+n\beta \geq 0$.
\end{enumerate}
\end{lem}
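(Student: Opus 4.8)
\textbf{Proof proposal for Lemma \ref{th:Gineq}.}

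The plan is to verify the two inequalities by working at a fixed point of $\bbs^n$ where $(b_{ij})$ is diagonalized, say $(b_{ij})=\mathrm{diag}(\lambda_1,\dots,\lambda_n)$ with all $\lambda_i>0$, and then expressing everything in terms of the eigenvalues and the elementary symmetric functions $\sigma_j=\sigma_j(\lambda)$. For part (ii) the quantity $\sum_i G^{ii}=\mathrm{tr}(DG)$ depends only on the eigenvalues; since $G=\frac{n}{k}(\sigma_1-(k+1)\sigma_{k+1}/\sigma_k)$ and $G$ is $1$-homogeneous, Euler's relation gives $\sum_i G^{ii}\lambda_i=G$, but I will actually need the full trace $\sum_i G^{ii}=\sum_i\partial G/\partial\lambda_i$. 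Using $\partial\sigma_1/\partial\lambda_i=1$ and the standard identities for $\partial(\sigma_{k+1}/\sigma_k)/\partial\lambda_i$, one computes $\sum_i G^{ii}$ as an explicit rational expression in the $\sigma_j$'s; then with $\beta=1+p_0$ the inequality $-(1+p_0)\sum_iG^{ii}+n(1+p_0)\ge 0$ reduces (since $1+p_0>0$ in the relevant range) to showing $\sum_i G^{ii}\le n$, i.e. an inequality among symmetric functions of positive reals, which should follow from Newton–Maclaurin inequalities.

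For part (i) I would again diagonalize and use $1$-homogeneity of $F$ to write $F^{ij}b_{ik}b_{kj}=\sum_i F^{ii}\lambda_i^2$ and $\sum_i F^{ii}\lambda_i=F$ (Euler). With $\beta=1+p_0$ the claimed inequality becomes $FG\ge n\sum_i F^{ii}\lambda_i^2$. Since $F=(\sigma_k/\sigma_l)^{1/(k-l)}$, the derivatives $F^{ii}$ can be written via $\partial\log F/\partial\lambda_i=\frac{1}{k-l}(\sigma_{k-1;i}/\sigma_k-\sigma_{l-1;i}/\sigma_l)$ where $\sigma_{m;i}$ denotes $\sigma_m$ of the variables omitting $\lambda_i$. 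The strategy is then to clear denominators and reduce $FG\ge n\sum_iF^{ii}\lambda_i^2$ to a polynomial inequality purely in $\sigma_1,\dots,\sigma_{k+1},\sigma_l$ on the positive cone; I expect this to follow from repeated application of Newton's inequality $\sigma_{j-1}\sigma_{j+1}\le\left(\tfrac{j(n-j)}{(j+1)(n-j+1)}\right)\sigma_j^2$ (or its normalized Maclaurin form), perhaps after recognizing $\sum_i F^{ii}\lambda_i^2$ as $F$ times a weighted average of the $\lambda_i$ and comparing that average with $G/(nF)$. A useful reduction: since both sides are homogeneous, one may normalize $F=1$, i.e. $\sigma_k=\sigma_l$, and then the inequality is $G\ge n\sum_i F^{ii}\lambda_i^2$ with the constraint $\sigma_k=\sigma_l$.

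The main obstacle will be part (i): proving $FG\ge n\sum_i F^{ii}\lambda_i^2$ for the quotient $F=(\sigma_k/\sigma_l)^{1/(k-l)}$ in full generality $0\le l<k\le n$. The case $l=0$ was handled in \cite{ChenLi2019}, and there the algebra already required care; for general $l$ the presence of $\sigma_l$ in the denominator of $F$ and of $\sigma_k$ in the denominator of $G$ means the polynomial inequality one lands on after clearing denominators is of fairly high degree, and it is not immediately a single textbook symmetric-function inequality. I anticipate that the key step is to show pointwise, for each $i$, that the eigenvalue $\lambda_i$ enters $\sum_i F^{ii}\lambda_i^2$ with total weight controlled by $G/(nF)$; concretely one wants $\sum_i \lambda_i^2 F^{ii}/F \le \frac{1}{n}\big(\sigma_1-(k+1)\sigma_{k+1}/\sigma_k\big)\cdot\frac{n}{k}\big/1$, and the right-hand side $G/F$ should be bounded below by the ``$2$-homogeneous average'' on the left using the concavity of $\sigma_{k+1}/\sigma_k$ together with Newton–Maclaurin. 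If a slick symmetric-function argument is elusive, a fallback is to treat $\lambda_1=\lambda_{\max}$ separately and estimate the contribution of the remaining eigenvalues by monotonicity, mirroring the bookkeeping in Section \ref{sec:W}.
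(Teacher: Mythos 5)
Your reduction of part (ii) to $\sum_i G^{ii}\le n$ and of part (i) to $FG\ge n\sum_i F^{ii}\lambda_i^2$ is exactly how the paper proceeds, and for (ii) the computation you sketch (explicit trace of $DG$ plus Newton's inequality $\sigma_{k-1}\sigma_{k+1}\le \tfrac{k(n-k)}{(k+1)(n-k+1)}\sigma_k^2$) is precisely the paper's proof. But for part (i) your proposal stops at the point where the real work begins: "clear denominators and hope repeated Newton inequalities close it" is not an argument, and you say as much yourself. The gap is filled in the paper by two concrete steps you do not identify. First, the algebraic identity
\begin{equation*}
\sum_i \lambda_i^2\,\sigma_{m-1}(\lambda|i)=\sigma_1\sigma_m-(m+1)\sigma_{m+1},
\end{equation*}
applied with $m=k$ and $m=l$ to $F^{ii}=\tfrac{1}{k-l}F\bigl(\sigma_{k-1}(\lambda|i)/\sigma_k-\sigma_{l-1}(\lambda|i)/\sigma_l\bigr)$, gives
\begin{equation*}
n\sum_i F^{ii}\lambda_i^2=\frac{nF}{k-l}\Bigl((l+1)\frac{\sigma_{l+1}}{\sigma_l}-(k+1)\frac{\sigma_{k+1}}{\sigma_k}\Bigr),
\end{equation*}
so that (i) is \emph{equivalent} to the clean inequality $G\ge \tfrac{n}{k-l}\bigl((l+1)\tfrac{\sigma_{l+1}}{\sigma_l}-(k+1)\tfrac{\sigma_{k+1}}{\sigma_k}\bigr)$; no denominator-clearing into a high-degree polynomial inequality is needed. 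Second, this last inequality is proved by a telescoping argument: by Lemma 2.1 of \cite{Gao-Ma2016}, $\tfrac{j\sigma_j}{(j-1)\sigma_{j-1}}-\tfrac{(j+1)\sigma_{j+1}}{j\sigma_j}\le \tfrac{\sigma_1}{j(j-1)}$ for each $j$, and summing over $j=l+1,\dots,k$ yields $\tfrac{(l+1)\sigma_{l+1}}{l\sigma_l}-\tfrac{(k+1)\sigma_{k+1}}{k\sigma_k}\le\bigl(\tfrac1l-\tfrac1k\bigr)\sigma_1$, which rearranges exactly to the required bound.

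Without these two steps your part (i) remains a plausible plan rather than a proof: the normalization $\sigma_k=\sigma_l$ does not by itself simplify the constraint set, and the fallback of splitting off $\lambda_{\max}$ and estimating the remaining eigenvalues "by monotonicity" has no evident mechanism to produce the sharp constant $n$ (equality already holds at $\lambda_1=\cdots=\lambda_n$, so any lossy eigenvalue-by-eigenvalue estimate is in danger of failing). So: (ii) is correct and matches the paper; (i) has a genuine gap, closed in the paper by the identity above and the telescoping use of the Gao--Ma inequality.
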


\begin{proof}
\begin{enumerate}[i)]
\item It is equivalent to show
\begin{equation}\label{eq:Ggeq}
G\geq \frac{n}{k-l}((l+1)\frac{\sigma_{l+1}}{\sigma_{l}}-(k+1)\frac{\sigma_{k+1}}{\sigma_{k}}).
\end{equation}
From Lemma 2.1 in \cite{Gao-Ma2016}
\begin{equation*}
\frac{1}{k(k-1)}\sigma_{1}-\frac{k\sigma_{k}}{(k-1)\sigma_{k-1}}+\frac{(k+1)\sigma_{k+1}}{k\sigma_{k}}\geq 0,
\end{equation*}
we know
\begin{align*}
&\frac{(l+1)\sigma_{l+1}}{l\sigma_{l}}-\frac{(k+1)\sigma_{k+1}}{k\sigma_{k}}=\sum_{i=l+1}^{k}\left(\frac{i\sigma_{i}}{(i-1)\sigma_{i-1}}-\frac{(i+1)\sigma_{i+1}}{i\sigma_{i}}\right)\\
&\qquad \leq
\sum_{i=l+1}^{k}\frac{1}{i(i-1)}\sigma_{1}=(\frac{1}{l}-\frac{1}{k})\sigma_{1},
\end{align*}
which implies \eqref{eq:Ggeq}.

\item We just need to check $\sum_{i}G^{ii}\leq n$. We show that
\begin{align*}
\sum_{i}G^{ii}=\frac{n}{k}\left(n-(k+1)(n-k)+(k+1)(n-k+1)\frac{\sigma_{k-1}
\sigma_{k+1}}{\sigma_{k}^{2}}\right) \leq n,
\end{align*}
where the inequality is from Newton's inequality.
\end{enumerate}
\end{proof}

Now, we prove Theorem \ref{th:curq-}.

\begin{proof}[Proof of Theorem \ref{th:curq-}]
First, we transform the equation \eqref{eq:cmq} to
\begin{equation*}
\left(\frac{\sigma_{k}(b_{ij})}{\sigma_{l}
(b_{ij})}\right)^{\frac{1}{k-l}}=u^{p_{0}}\quad \text{
on }\bbs^{n},
\end{equation*}
where $p_{0}=\frac{p-1}{k-l}$ and $-1<p_{0}<0$. Thus the left hand
side of equation above is a $1$-homogeneous and concave function.

The convexity of $G$ implies $F^{ij}G^{kl,pq}b_{kli}b_{pqj}\geq 0$. And, from Proposition \ref{th:Z} and Lemma \ref{th:Gineq}, we know
\begin{align*}
F^{ij}D_{i}D_{j}Z&\geq p_{0}(p_{0}-1)u^{p_{0}-1}G^{ij}u_{i}u_{j}-n\beta p_{0}u^{p_{0}-1}\abs{Du}^{2}+2F^{ij}u_{i}D_{j}G\\
&\quad-uG^{kl}F^{ij,pq}b_{ijk}b_{pql}.
\end{align*}

Using
\begin{equation}
D_{j}Z=u_{j}G+uD_{j}G-n(p_{0}+1)b_{jj}u_{j},
\end{equation}
we have
\begin{align*}
&F^{ij}D_{i}D_{j}Z-\frac{2}{u}F^{ij}u_{i}D_{j}Z\\
&\qquad\geq p_{0}(p_{0}-1)u^{p_{0}-1}G^{ij}u_{i}u_{j}-n\beta p_{0}u^{p_{0}-1}\abs{Du}^{2}\\
&\hspace{3em} -\frac{2}{u}F^{ij}u_{i}(u_{j}G-n(p_{0}+1)b_{jj}u_{j})-uG^{kl}F^{ij,pq}b_{ijk}b_{pql}\\
&\qquad =\Big\{p_{0}(p_{0}-1)FG^{ii}-n\beta p_{0}F-2GF^{ii}+2n(p_{0}+1)F^{ii}b_{ii} \Big\}\frac{u_{i}^{2}}{u}\\
&\hspace{3em}-uG^{kl}F^{ij,pq}b_{ijk}b_{pql}.
\end{align*}

If $\bar{x}$ is a maximum point of $W$, then $b_{11}=b_{22}=\cdots=b_{nn}$ by Lemma \ref{th:W}. Thus $G_{ii}(\bar{x})=1$ and $F^{ii}(\bar{x})=\frac{F}{nb_{ii}}$. This implies
\begin{align*}
&p_{0}(p_{0}-1)FG^{ii}-n\beta p_{0}F-2GF^{ii}+2n(p_{0}+1)F^{ii}b_{ii}\\
&\qquad =p_{0}(p_{0}-1)F-n(p_{0}+1) p_{0}F-2F+2(p_{0}+1)F\\
&\qquad =-(n-1)p_{0}(p_{0}+1)F>0
\end{align*}
at $\bar{x}$.

Combining with concavity of $F$, this implies that there is a small neighborhood of $\bar{x}$, denoted by $U$, such that
\begin{align*}
F^{ij}D_{i}D_{j}Z-\frac{2}{u}F^{ij}u_{i}D_{j}Z\geq 0.
\end{align*}

By $Z(\bar{x})=nW_{\max}\geq nW\geq Z$ and strong maximum principle,
we know that $W$ is a constant in $U$. Since $\bbs^{n}$ is
connected, we know that $W$ is a constant. Then Lemma \ref{th:W}
shows $Du=0$ on $\bbs^{n}$ which implies $u$ is a constant. Thus, we
complete our proof.
\end{proof}


\end{document}